\newtheorem{theorem}{Theorem}
\newtheorem{lemma}{Lemma}
\newtheorem{proposition}{Proposition}
\newtheorem{definition}{Definition}
\DeclareMathOperator{\var}{var}
\def\Xint#1{\mathchoice
{\XXint\displaystyle\textstyle{#1}} 
{\XXint\textstyle\scriptstyle{#1}} 
{\XXint\scriptstyle\scriptscriptstyle{#1}} 
{\XXint\scriptscriptstyle\scriptscriptstyle{#1}} 
\!\int}
\def\XXint#1#2#3{{\setbox0=\hbox{$#1{#2#3}{\int}$ }
\vcenter{\hbox{$#2#3$ }}\kern-.6\wd0}}
\def\dashint{\Xint-}
\begin{document}
\title[ON REGULARITY OF THE DISCRETE HARDY-LITTLEWOOD MAXIMAL FUNCTION]
      {On regularity of the discrete Hardy-Littlewood maximal function}
      
\author{Faruk Temur}
\address{Department of Mathematics\\
       Izmir Institute of Technology, Urla, Izmir, 35430, Turkey }
\email{faruktemur@iyte.edu.tr}
\keywords{Hardy-Littlewood maximal function,  boundedness of variation, discrete maximal function}
\subjclass[2010]{Primary: 42B25; Secondary: 46E35}
\date{January 8, 2016}    

\begin{abstract}
 We show that the variation of the  discrete Hardy-Littlewood maximal function is bounded. We explicitly compute the boundedness constant.  We adapt the methods of Kurka to this case, and observe that these methods are substantially shorter and simpler in this discrete case.  
\end{abstract}

\maketitle

\section{Introduction}\label{intro}

In \cite{k} J. Kinnunen proved the boundedness of the Hardy-Littlewood maximal operator given by 
\[Mf(x):=\sup_{r>0} \dashint_{B(x,r)}|f(y)|dy\]
 on the Sobolev space $W^{1,p}(\mathbb{R}^n)$ for $1<p \leq \infty.$ Since  $Mf$ is never integrable for non-trivial functions this cannot be extended to $p=1.$ However  one can ask  whether the operator $f \mapsto \nabla Mf$ is bounded from $W^{1,p}(\mathbb{R}^n)$  to $L^1(\mathbb{R}^n)$. This question, asked by Hajlasz and Onninen in  \cite{ho}, was answered  positively for $n=1$ in  the easier case of non-centered maximal function by Tanaka, and for the centered case recently by Kurka; see \cite{t,ku}. Indeed  the result of Tanaka was strengthened by J.M. Aldaz and J. P\'erez L\'azaro in \cite{apl2} to show
\begin{equation}\label{e1.1}
 \var{\widetilde{M}f\leq \var f}
 \end{equation}
where  $\widetilde{M}f$ is the non-centered maximal function, whereas Kurka derived his answer to the question from  the analogous result for the centered one:
 \begin{equation}\label{e1.2}
 \var{{M}f\leq C \var f}.
 \end{equation}

Consider the discrete Hardy-Littlewood maximal function 
\[Mf(n)=\sup_{r\in \mathbb{Z}^+}\frac{1}{2r+1}\sum_{k=-r}^r |f(n+k)|\] 
where  $f:\mathbb{Z}\mapsto \mathbb{R}$  and  $\mathbb{Z}^+$  denotes non-negative integers.  One can similarly define the non-centered version:
\[\widetilde{M}f(n)=\sup_{r,s\in \mathbb{Z}^+}\frac{1}{r+s+1}\sum_{k=-r}^s |f(n+k)|.\] 
 Although the result of Kinnunen   does not meaningfully extend to this setting, the  analogue of \eqref{e1.1}     was showed by Bober, Carneiro, Hughes and Pierce in \cite{bchp}.  In this paper we will extend   \eqref{e1.2} to discrete setting. More precisely  define
\[\var f:= \sum_{k\in \mathbb{Z}}|f(k+1)-f(k)|\]
for  a function $f:\mathbb{Z}\mapsto \mathbb{R}$. Then  we prove the following. 
\begin{theorem}
Let $f:\mathbb{Z}\mapsto \mathbb{R}$ be a function of bounded variation. Then
\[\var{{M}f\leq C \var f}.\]

\end{theorem}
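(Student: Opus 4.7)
The approach is to adapt Kurka's continuous-case argument to the discrete setting, exploiting the simplification that best radii lie in $\mathbb{Z}^+$ and hence come with a natural minimum scale.

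First I would reduce to the case where $f \geq 0$ is compactly supported in $\mathbb{Z}$: the general case follows by truncation together with lower semicontinuity of variation under pointwise limits, and the identity $Mf = M|f|$. Under this reduction $Mf$ is nonnegative, bounded, and tends to zero at infinity, so its local maxima $\{n_i\}$ and minima $\{m_i\}$ with $n_i < m_i < n_{i+1}$ form an alternating sequence (possibly finite), and
\[\var Mf = 2\sum_i \bigl(Mf(n_i) - Mf(m_{i-1})\bigr) + 2\sum_i \bigl(Mf(n_i) - Mf(m_i)\bigr).\]
It therefore suffices to bound $\sum_i (Mf(n_i) - Mf(m_i))$, and its analogue for $m_{i-1}$, by $C \var f$. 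For each $n_i$ I fix a best radius $r_i \in \mathbb{Z}^+$ realizing the supremum in the definition of $Mf(n_i)$; a concrete choice (for example, the smallest such $r_i$) may be needed later.

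The core step is a pairwise estimate. Writing $d_i = m_i - n_i$, in the \emph{far} case $d_i \geq r_i$ one has the pointwise comparison
\[Mf(n_i) - Mf(m_i) \leq \frac{1}{2r_i + 1}\sum_{|k|\leq r_i}\bigl(f(n_i + k) - f(m_i + k)\bigr),\]
and each summand is bounded by the variation of $f$ on the interval $[n_i + k, m_i + k]$; since these intervals together cover $[n_i - r_i, m_i + r_i]$ with multiplicity at most $d_i$, one obtains a clean bound by $\var f$ restricted to this interval. In the \emph{close} case $d_i < r_i$, testing $Mf(m_i)$ with the ball of radius $r_i + d_i$, which contains the optimal ball at $n_i$ plus an extra segment $J_i$ of length $2d_i$, yields
\[Mf(n_i) - Mf(m_i) \leq \frac{1}{2r_i + 1}\sum_{k \in J_i}\bigl(Mf(m_i) - f(k)\bigr).\]
The right side must then be further estimated by replacing $Mf(m_i)$ with an average of $f$ near $m_i$ and bounding the resulting differences by a variation of $f$.

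The principal obstacle is then combinatorial: summing these pairwise bounds without losing a factor growing in the number of extrema. I would classify the pairs $(n_i, m_i)$ into dyadic generations by the ratio $r_i/d_i$ and show that within each generation the witness intervals have bounded overlap, the total contribution across generations being geometrically summable. The delicate situation is a long cluster of local maxima whose optimal balls all overlap a common region, where crude telescoping loses too much and one must iterate the comparison to extract cancellation, as in Kurka's flat-region analysis. Obtaining a uniform constant $C$ independent of the configuration of extrema is the crux; the fact that radii are bounded below by $1$ should permit a cleaner case split than in the continuous setting, but the combinatorics of the witness intervals will remain the main challenge.
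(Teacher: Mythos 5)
Your setup is sound and correctly matches the spirit of Kurka's argument — reduce to $f\geq 0$, decompose $\var Mf$ over the local extrema of $Mf$, fix a best radius $r_i$ at each local maximum, and split into the ``far'' case $d_i\geq r_i$ and the ``close'' case $d_i<r_i$. Your pairwise estimates in both cases are essentially correct. But the proposal stops at precisely the point where the genuine difficulty begins, and the mechanism you sketch for overcoming it would not work.

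The gap is in the claim that classifying the pairs $(n_i,m_i)$ into dyadic generations by $r_i/d_i$ yields witness intervals of bounded overlap within each generation. This is false: many peaks can have comparable $r_i$ and comparable $d_i$ while their balls $[n_i-r_i,\,n_i+r_i]$ (which is where the witness interval lives) overlap heavily, since the peaks are only separated by $\sim d_i\ll r_i$. Bounded overlap holds if you classify by the absolute radius $r_i$ and by spatial location, and even then only at a single dyadic radius scale; combining across scales is where all the work lies. The paper (following Kurka) does not use a bounded-overlap argument at all. Instead it (i) introduces \emph{essential} peaks, for which $f$ dips at least $\tfrac14\var\mathbbm{p}$ below $Mf(r)$ between the neighbors, so that non-essential peaks contribute $\leq 2\var f$ directly (Lemma 3); (ii) assigns to each essential peak a canonical optimal radius $\omega(r)$ and proves the crucial structural fact $r-\omega(r)<p<q<r+\omega(r)$ (Lemma 1), which ties the radius to the spacing of the extrema; (iii) proves a \emph{block} lemma (Lemma 2) estimating the total variation of \emph{all} essential peaks of comparable radius $\omega$ lying in a single interval of length $\sim\omega$ by two values of $f$ and one average of $f$ over a short interval — i.e.\ collective rather than pairwise control; and (iv) runs a multi-scale induction (Propositions 1 and 2) that incorporates blocks at radius scale $2^{n_0}$ into a system of witness points and intervals already built from all larger scales, using the fact that the averaging intervals in the existing system are long enough to be subdivided when a new block lands inside one. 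That inductive, cross-scale bookkeeping is the heart of the proof; your proposal acknowledges the difficulty (``the combinatorics of the witness intervals will remain the main challenge'') but offers no substitute for it, and the dyadic/bounded-overlap idea you propose does not supply one.

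A smaller point: your identity $\var Mf = 2\sum_i(Mf(n_i)-Mf(m_{i-1})) + 2\sum_i(Mf(n_i)-Mf(m_i))$ overcounts by a factor of $2$; the correct relation is $\var Mf = \sum_i(Mf(n_i)-Mf(m_{i-1})) + \sum_i(Mf(n_i)-Mf(m_i))$. This is harmless, but worth fixing.
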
  
It is conjectured in \cite{bchp} that $C=1$, but as in Kurka's work we are not  able to obtain this constant. 

We will adapt ideas developed by Kurka in \cite{ku} to discrete setting to obtain our result. 
The rest of the paper is organized as follows. In the next section we will give definitions necessary for classifying local extrema, and state  a lemma that handles  the variation arising from one class  of local extrema at a time. Using these we will explain  main ideas underlying the proof and then  prove this lemma. In last three sections  the issue of putting all classes together will be dealt with.

\section{Preliminaries}

Before going to our definitions we first  note that it suffices to prove our theorem for non-negative functions. So let $f\geq 0$ be a function defined on integers with bounded variation.

\begin{definition} 
{\rm\textbf{I.}} A peak is a system of three integers $p<r<q$  satisfying $Mf(p)<Mf(r)$ and $Mf(q)<Mf(r)$.\\
{\rm\textbf{II.}} We define the variation of a peak $\mathbbm{p}=\{p<r<q\}$ by 
\[\var \mathbbm{p}=2Mf(r)-Mf(p)-Mf(q). \]
{\rm\textbf{III.}} We define the variation of a system $\mathbbm{P}$ of peaks by
\[\var \mathbbm{P}=\sum_{\mathbbm{p}\in \mathbbm{P}} \var \mathbbm{p}.\]
{\rm\textbf{IV.}} We call a peak $\mathbbm{p}$ essential if  
\[\max_{p<k<q} f(k)\leq  Mf(r)-\frac{1}{4}\var \mathbbm{p}.\]
{\rm\textbf{V.}} We define averaging operators of radius k for a non-negative integer k by 
\[A_kf(n)=\frac{1}{2k+1}\sum_{j=-k}^kf(n+j)\]
{\rm\textbf{VI.}} We define the radius of  an essential peak as 
\[\omega(r):=\max\{w>0: A_{\omega}f(r)=Mf(r)\}\]
\end{definition} 
Clearly  the last part of the definition needs further elaboration. We need to know that the set under consideration is not empty and that it contains finitely many elements.  These as well as a further property of $\omega(r)$ shall be dealt with below, but first we introduce some further notation. For $x,y \in \mathbb{Z}$ the notation $[x,y]$ will stand for integers $n$ satisfying $x \leq n \leq y$, and  we will call $[x,y]$ an interval. By length of an interval $[x,y]$ we will mean the quantity $y-x.$ We define  average of a function $f$ on an interval $[x,y]$ by
\[A_{x,y}f=\frac{1}{y-x+1}\sum_{k=x}^yf(k).\]
Now we state and prove the lemma clarifying the last part of the Definition 1.

\begin{lemma}
Let $\mathbbm{p}=\{p<r<q\}$ be and essential peak. Then $w(r)$ is well defined and satisfies 
\[r-\omega(r)<p<q<r+\omega(r).\]
\end{lemma}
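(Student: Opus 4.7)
The plan is to prove the lemma in two stages: first I would establish that $\omega(r)$ is well defined, and then show the inclusion $r-\omega(r) < p < q < r+\omega(r)$.

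\emph{Well-definedness.} Since $f\geq 0$ has bounded variation, a telescoping argument yields finite limits $L^\pm := \lim_{n\to\pm\infty} f(n)$, whence $A_w f(r)\to L := \tfrac{1}{2}(L^+ + L^-)$ as $w\to\infty$. The peak inequality $Mf(p) < Mf(r)$, together with the trivial bound $Mf(p) \geq L$, forces $Mf(r) > L$. Consequently the supremum defining $Mf(r)$ is attained on a bounded, hence finite, set of integer radii. To rule out the possibility that the only such radius is $w = 0$, I would use essentiality: $f(r) \leq \max_{p<k<q} f(k) \leq Mf(r) - \tfrac{1}{4}\var \mathbbm{p} < Mf(r)$ (the last inequality since $\var\mathbbm{p} > 0$), so the supremum is achieved at some strictly positive integer radius and $\omega(r)$ is well defined.

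\emph{The inclusion.} I would prove the equivalent statement that $A_w f(r) < Mf(r)$ for every $0 < w \leq \max(r-p, q-r)$; combined with well-definedness this forces $\omega(r) > \max(r-p, q-r)$. Assume without loss of generality $r-p \leq q-r$. For $w \leq r-p$ the window $[r-w, r+w]$ is contained in $[p, q]$, and every term $f(k)$ with $k$ in this window is strictly less than $Mf(r)$: at interior points $p<k<q$ by essentiality, and at the endpoints $k\in\{p,q\}$ via $Mf(p), Mf(q) < Mf(r)$; the strict bound on the average follows. For $r-p < w \leq q-r$, set $s = w - (r-p)$, so that $[p-s, p+s] = [r-w, 2p-r+w]$, a window sharing its left endpoint with $[r-w, r+w]$. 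Writing
\[
(2w+1)A_w f(r) = (2s+1) A_s f(p) + \sum_{k=2p-r+w+1}^{r+w} f(k),
\]
I would apply $A_s f(p) \leq Mf(p) < Mf(r)$ to the first piece, and bound each of the $2(r-p)$ terms of the trailing sum strictly by $Mf(r)$ --- these indices lie in $(p, q]$, so essentiality handles the interior and $Mf(q) < Mf(r)$ handles the endpoint $k=q$ when it occurs. Since $2(s+(r-p))+1 = 2w+1$, the two strict inequalities add to give $A_w f(r) < Mf(r)$. The case $q-r < r-p$ is symmetric with $p$ and $q$ interchanged.

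The main obstacle is the second case above: one must arrange the decomposition so that the leftover block of indices lies entirely in $(p, q]$, where essentiality (or the boundary peak inequality at $q$) supplies the required strict bound. Once this index arithmetic is laid out correctly the two strict estimates combine cleanly.
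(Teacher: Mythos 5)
Your proof is correct, and the key computation is essentially the one the paper uses, though you organize it a bit differently. For the inclusion, both arguments rest on the same window-splitting device: break $[r-w, r+w]$ into a window centered at $p$ (whose average is $\leq Mf(p) < Mf(r)$) plus a trailing block of $2(r-p)$ indices lying in $(p,q]$ (where essentiality together with $Mf(q) < Mf(r)$ gives each value $< Mf(r)$), so the full average falls strictly below $Mf(r)$. The paper first notes that at least one of the two strict inclusions must hold (otherwise the whole window at radius $\omega(r)$ sits in $[p,q]$), then assumes the other fails and applies the split once, at $w=\omega(r)$, to get a contradiction; you instead run the identical split uniformly for every $w\le\max(r-p,q-r)$ to conclude $\omega(r)>\max(r-p,q-r)$ directly, which is a cleaner packaging of the same idea. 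For well-definedness the two presentations diverge more in flavor though not in substance: the paper argues by contradiction, extracting a sequence $\omega_j\to\infty$ with $A_{\omega_j}f(r)\to Mf(r)$ and then noting $A_{\omega_j}f(p)$ has the same limit, contradicting $Mf(p)<Mf(r)$; you make the mechanism explicit by computing the limit $L$ of the averages (valid since bounded variation gives $f(\pm\infty)$), observing $Mf(p)\ge L$ hence $Mf(r)>L$, and deducing the supremum is attained at a bounded set of radii. Both hinge on the same phenomenon — $Mf(p)<Mf(r)$ forbids the supremum from being achieved "at infinity" — and both invoke essentiality only to exclude $w=0$. Your version is slightly more constructive and makes the role of bounded variation more visible.
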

\begin{proof}
First let's see that our set  is nonempty.  Since $\mathbbm{p}$ is an essential peak we have 
$f(r)<Mf(r)$.   Thus  for our set to be empty  we  must have for every $\omega\geq0$
$A_{\omega}f(r)<Mf(r).$
But then  also by definition of the maximal function we must have a strictly increasing sequence $\{\omega_j\}_{j\in \mathbb{N}}$ of  natural numbers such that
\[\lim_{j\rightarrow \infty}A_{\omega_j}f(r)=Mf(r).\]
But note that we also have
\[Mf(p)\geq \lim_{j\rightarrow \infty}A_{\omega_j}f(p)=Mf(r)>Mf(p).\]
Note that this same argument also gives that our set cannot contain infinitely many elements, hence $\omega(r)$ is well defined. 

Now note that $f(p)\leq Mf(p) <Mf(r)$ and $f(q) \leq Mf(q)<Mf(r)$, thus $p \leq r-\omega(r)<r+\omega(r)\leq q$ would imply $A_{\omega(r)}f(r)<Mf(r)$, hence at least one of  $r-\omega(r)<p$,   $q<r+\omega(r)$ is true. We assume the first one is true, and the second is wrong: the converse can be dealt with similarly.   We have $A_{p+\omega(r)-r}f(p) \leq Mf(p)<Mf(r)$, which means 
$ A_{2p+\omega(r)-r+1,r+\omega(r)}f \geq Mf(r).$ 
But  $p<2p+\omega(r)-r+1<r+\omega(r)\leq q$ means
$ A_{2p+\omega(r)-r+1,r+\omega(r)}f< Mf(r).$ 
 Hence we are done.

\end{proof}

The following is the lemma that handles variation arising from a  specific class of local extrema. As shall be explained, it plays a fundamental role in our proof.

\begin{lemma}
Let $[x,y]$ be an interval of length $L$ with $L$ an even integer. Let $\mathbbm{p_i}=\{p_i<r_i<q_i\}$ be a system of essential peaks satisfying 
\[x \leq r_1<q_1 \leq p_2<r_2 <q_2 \leq \ldots \leq p_{m-1} < r_{m-1} \leq p_m< r_m \leq y\]
and $32L<w(r_i)\leq 64L$ for $1\leq i \leq m.$ Then  there exists $s<u<v<t$ such that,
\[x-64L \leq s,\ \ t \leq y+64L, \ \ u-s \geq 4L, \ \ v-u=L, \ \ t-v \geq 4L\]
\[\min\{f(s),f(t)\}- A_{u,v}f \geq \frac{1}{12} \sum_{i=1}^m \var \mathbbm{p}_i\]

\end{lemma}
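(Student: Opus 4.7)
Write $\alpha_i := \omega(r_i)$, $M_i := Mf(r_i)$, $V_i := \var\mathbbm{p}_i$, and $V := \sum_i V_i$. The starting point is the averaging identity
\[
\sum_{k=r_i-\alpha_i}^{r_i+\alpha_i} f(k) = (2\alpha_i+1)M_i,
\]
which holds from $\omega(r_i)=\alpha_i$. Since $A_{\alpha_i-1}f(r_i) \le M_i$ by the definition of $Mf$, subtracting yields the pointwise \emph{endpoint identity}
\[
f(r_i - \alpha_i) + f(r_i + \alpha_i) \ge 2M_i.
\]
Combined with the essentiality bounds $f|_{(p_i,q_i)} \le M_i - V_i/4$ and $f(p_i), f(q_i) \le M_i - V_i/2$, the averaging identity sharpens to
\[
\sum_{I_i \setminus [p_i, q_i]} f \ge \bigl(2\alpha_i - (q_i-p_i)\bigr) M_i + \tfrac{q_i - p_i + 3}{4} V_i,
\]
so each peak deposits an ``excess mass'' proportional to $V_i$ on its tails $[r_i - \alpha_i, p_i - 1]$ and $[q_i + 1, r_i + \alpha_i]$.

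The geometry cooperates. Because $32L < \alpha_i \le 64L$ and $r_i \in [x, y]$, every left tail lies in $[x - 64L, x - 1]$ and every right tail in $[y + 1, y + 64L]$, and in fact the endpoints $r_i - \alpha_i$ already fall in $[x - 64L, x - 31L - 1] \subset [x - 64L, x - 4L]$, with the symmetric inclusion on the right. My plan is to pick $[u, v]$ of length $L$ sitting inside $\bigcup_i [p_i, q_i]$ (padding with a neighborhood controlled by a nearby peak's averaging identity if the union is too short); essentiality then makes $A_{u, v}f$ small compared to $M^* := \max_i M_i$. Candidates for $s$ and $t$ are drawn from $\{r_i - \alpha_i\}_i$ and $\{r_j + \alpha_j\}_j$ via the endpoint identity and a pigeonhole over the peaks.

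The constant $\tfrac{1}{12}$ should emerge from the split $\tfrac{1}{4} - \tfrac{1}{6} = \tfrac{1}{12}$: the drop of $A_{u, v}f$ below $M^*$ is at least of order $V_i/4$ by essentiality, while a pigeonhole exploiting $\sum_i [f(r_i - \alpha_i) + f(r_i + \alpha_i)] \ge 2 \sum_i M_i$, together with the freedom to vary over peaks, keeps $\min(f(s), f(t))$ within $V_i/6$ of $M^*$, producing a net gap of $V_i/12$ per peak and $V/12$ in total.

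The main obstacle is the left–right asymmetry: the endpoint identity controls only the \emph{sum} $f(r_i - \alpha_i) + f(r_i + \alpha_i)$, so in principle one side could be very small while the other compensates. To force both $f(s)$ and $f(t)$ to be simultaneously large requires varying over peaks (and possibly over radii) to extract a balanced pair, as in Kurka's continuous argument. A secondary difficulty is that a portion of the tail excess may accumulate in the forbidden zones $[x - 4L, x - 1]$ and $[y + 1, y + 4L]$, where $s, t$ cannot be chosen; the ratio of allowed length $60L$ to forbidden length $4L$ is what gives the room to absorb this loss and pin the final constant to $1/12$.
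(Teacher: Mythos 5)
Your sketch correctly spots the averaging identity $\sum_{k=r_i-\omega(r_i)}^{r_i+\omega(r_i)} f(k) = (2\omega(r_i)+1)Mf(r_i)$ as the engine, and you even name the central difficulty yourself: the ``endpoint identity'' $f(r_i-\omega(r_i))+f(r_i+\omega(r_i)) \ge 2Mf(r_i)$ only controls the sum, so it gives no guarantee that you can find large values of $f$ on \emph{both} sides simultaneously. You then defer this to ``varying over peaks (and possibly over radii) to extract a balanced pair,'' without producing the mechanism. That is precisely the place where the proposal is a gap rather than a proof, and the paper closes it by a different move: it never uses the endpoint identity at all. Instead it exploits the peak structure $Mf(p)<Mf(r)$, $Mf(q)<Mf(r)$ directly. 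Writing $\omega=\omega(r)$, one compares $A_{\omega}f(r)=Mf(r)$ with $A_{r+\omega-q}f(q)\le Mf(q)<Mf(r)$; the second interval $[2q-r-\omega,\,r+\omega]$ is a terminal subinterval of the first, so the average of $f$ on the \emph{left} remnant $[r-\omega,\,2q-r-\omega-1]$ is forced to be $\ge Mf(r)$, yielding a point $s$ there with $f(s)\ge Mf(r)$. Symmetrically, using $A_{p-(r-\omega)}f(p)\le Mf(p)$ gives a point $t$ in the \emph{right} remnant with $f(t)\ge Mf(r)$. Both sides are handled individually, with quantitative location (so $s<2q-(r+\omega)$ and $t>2p-(r-\omega)$, which are later checked to avoid the ``forbidden zones'' near $[x,y]$). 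Nothing needs to be balanced.

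Two further places where the sketch does not match what is actually required. First, you propose placing $[u,v]$ inside $\bigcup_i[p_i,q_i]$; but this union can be much shorter than $L$, and the paper in fact splits into the cases $q-p<12L$ (place $[u,v]$ around whichever of $p$, $q$ has the smaller $Mf$; the localization of $s,t$ then automatically gives $u-s\ge 4L$ and $t-v\ge 4L$) and $q-p\ge 12L$ (place $[u,v]$ around $\lfloor(p+q)/2\rfloor$ and use essentiality). ``Padding with a neighborhood controlled by a nearby peak'' does not reduce to either of these and is not substantiated. Second, and more substantively, a single application of the endpoint/averaging idea per peak cannot give the multi-peak statement: the paper's multi-peak argument replaces the $p_i,q_i$ by a telescoping sequence $e_i$, proves sharper lower bounds for $f(s_i),f(t_i)$ involving the slopes $(Mf(r_i)-Mf(e_{i+1}))/(e_{i+1}-r_i)$, and then splits on whether $|Mf(e_{m+1})-Mf(e_1)|$ exceeds half the total variation, using a maximizing-slope index $i_0$ and the inequality $\omega(r_i)>32L\ge 32\sum_i(e_{i+1}-r_i)$. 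None of this appears in your outline, and without it the per-peak constants do not sum to anything like $V/12$; the heuristic ``$1/4-1/6=1/12$'' split is not an argument. In short: you identified the right objects and the right obstacle, but the proof you gestured at does not exist in the form sketched, and the paper's resolution goes through the peak inequalities and the $e_i$-telescoping, not through the endpoint identity.
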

This lemma says that if in a system of essential peaks, all peaks lie in an interval of length comparable to all of their  radii, the variation of this system can be bounded by using values of the function at nearby points. So this immediately implies that we can put together such systems located at sufficiently distant intervals easily. Hence  even if we do not require the peaks to lie in an interval of certain length, the system can be broken into subsystems using a finite covering of the real line by equally spaced intervals and then easily dominated by the variation of the function. As we will see in the section 3, it is very easy to estimate the variation of non-essential peaks, so  proving this lemma reduces the problem to taking care of systems  with essential peaks of different length scales.

\begin{proof}

We shall decompose the system into three parts. If we take the first and the last peaks out, there remains a system which entirely lies in the interval. Thus proving the lemma for single peaks and systems lying in the interval, with  constant on the right hand side 1/4 instead of 1/12 suffices. We will first prove the lemma for a single peak so let $\mathbbm{p}=\{p<r<q\}$ denote our system. Our first step is to find $s,t$ satisfying
\[f(s)\geq Mf(r), \ \ \ \  x-64L \leq s \leq 2q-(r+\omega(r))\] 
\[f(t)\geq Mf(r), \ \ \ \  2p-(r-\omega(r)) \leq t \leq y+64L\] 
We shall utilize the same ideas as used in Lemma 1 and since the same procedure dealswith both, we shall find an $s$ only. 
\[A_{\omega(r)}f(r)=Mf(r), \ \ \ A_{r+\omega(r)-q}f(q)\leq Mf(q) < Mf(r)\]
thus 
\[A_{r-\omega(r),2q-(r+\omega(r))-1}f\geq Mf(r)\] 
Since $x-64L \leq r-\omega(r)$, there must be an $s$ with desired properties.

To locate  suitable $u,v$ we shall  consider two subcases:\\
\textbf{I.}If $q-p < 12L$ then  
\[s <  2q-(r+ \omega(r))< 2p +24L-r-32L < p-8L.\]
Similarly 
\[t > 2p-(r-\omega(r)) >2q-24L-r+\omega(r) > q+8L. \]
So we set $u=p-L/2, \  v=p+L/2$ if $Mf(p) \leq Mf(q)$, and $u=q-L/2,\ v=q+L/2$ otherwise. This choice clearly satisfies distance requirements, and
\[\min\{f(s), f(t)\}-A_{u,v}f \geq Mf(r)-\min\{Mf(p), Mf(q)\}\geq \frac{1}{2}\var \mathbbm{p} \] 
\textbf{II.} Let $q-p \geq 12L$. Since $\mathbbm{p}$ is an essential peak
\[f(s), f(t) \geq Mf(r)> \max_{p<k<q}f(k).\]
Thus $s\leq p<q \leq t.$ Choosing 
\[u=\left\lfloor \frac{p+q}{2} \right\rfloor-L/2, \ \ v=\left\lfloor \frac{p+q}{2} \right\rfloor+L/2\]
Then 
\[u-s\geq \frac{p+q}{2}-1-L/2-p \geq \frac{q-p}{2}-L/2-1 \geq 5L-1 \geq 4L\]
\[t-v \geq q-\frac{p+q}{2}-L/2 \geq 5L\]
and
\[\min\{f(s), f(t)\}-A_{u,v}f \geq Mf(r)- \max_{p<k<q}{f(k)} \geq \frac{1}{4}\var \mathbbm{p}.\]

Now assume that our peaks are entirely contained in $[x,y]$, so $x\leq p_1, q_{m} \leq y.$ We will work with a modification of our system: set
\begin{equation*}
\begin{aligned}
e_i&=p_i, \ \ \ \ \  \  i=1 \ \text{or} \ Mf(p_i)\leq Mf(q_{i-1})\\
e_i&=q_{i-1}, \ \ \    i=m+1 \  \text{or}  \ Mf(p_i)> Mf(q_{i-1})
\end{aligned}
\end{equation*}
and 
\[\widetilde{\mathbbm{p}_i}=\{e_i<r_i<e_{i+1}\}, \ \ 1 \leq i \leq m. \]
 We will  show the existence of $s_i, t_i$ for  $1\leq i \leq m$ that satisfy
 \[f(s_i) \geq Mf(e_{i+1})+ \frac{Mf(r_i)-Mf(e_{i+1})}{e_{i+1}-r_i}\cdot \omega(r_i), \ \ x-64L \leq s_i \leq x-30L,\]
\[f(t_i) \geq Mf(e_{i})+ \frac{Mf(r_i)-Mf(e_{i})}{r_i-e_i}\cdot \omega(r_i), \ \ y+30L \leq t_i \leq y+64L.\]
We will find  $s_i$, and $t_i$ are found similarly. We have
\[A_{\omega(r_i)}f(r_i)=(2\omega(r_i)+1)Mf(r_i),\]
\[ A_{r+\omega(r_i)-e_i}f(e_i)\leq (2(r_i+\omega(r_i)-e_i)+1)Mf(e_{i+1}).\]
So 
\[ A_{r_i-\omega(r_i),2e_i-r_i-\omega(r_i)-1}f\geq (2\omega(r_i)+1)(Mf(r_i)-Mf(e_{i+1}))+2(e_i-r_i)Mf(e_{i+1})\]
Since  $x-64L \leq r_i-\omega(r_i)$ and $   2e_i-r_i-\omega(r_i)-1 \leq 2y-x-32L =x-30L$
there exists an $s_i$ with asserted properties. 

To locate $u, v$ we consider two cases.\\
\textbf{I.} Let 
\[|Mf(e_{m+1})-Mf(e_1)|>\frac{1}{2}\sum_{i=1}^{m}\var\widetilde{\mathbbm{p_i}}.\]
Let's also assume that $Mf(e_{m+1})>Mf(e_1)$, the other case is similar. Since $Mf(e_1)\geq f(e_1)$, if we can show that $f(s_i),f(t_j) \geq Mf(e_{m+1})$ for some $i,j$ choosing $s=s_i,t=t_j,u=e_1-L/2,v=e_1+L/2$ will do. From our choice of $s_i$ we have $Mf(s_m)\geq Mf(e_{m+1})$ and 
\begin{equation*}\begin{aligned}
f(t_m)\geq Mf(e_m)+\frac{Mf(r_m)-Mf(e_{m})}{r_m-e_m}\cdot (r_m-e_m)&=Mf(r_m)\\
& \geq Mf(e_{m+1})
\end{aligned}
\end{equation*}
\textbf{II.} Let
\[|Mf(e_{m+1})-Mf(e_1)| \leq \frac{1}{2}\sum_{i=1}^{m}\var\widetilde{\mathbbm{p_i}}.\]
We know that  
\[Mf(e_{m+1})-Mf(e_{1}) = \sum _{i=1}^{m}  \big( Mf(r_{i})-Mf(e_{i}) \big) - \big( Mf(r_{i})-Mf(e_{i+1}) \big)  \]
and that
\[ \sum _{i=1}^{m} \var \widetilde{\mathbbm{p}}_{i} = \sum _{i=1}^{m} \big( Mf(r_{i})-Mf(e_{i}) \big) + \big( Mf(r_{i})-Mf(e_{i+1}) \big). \]
Thus we have
\[\sum _{i=1}^{m}  Mf(r_{i})-Mf(e_{i}), \    \sum _{i=1}^{m}  Mf(r_{i})-Mf(e_{i+1})  \geq \frac{1}{4}\sum _{i=1}^{m} \var \widetilde{\mathbbm{p}}_{i},\]
We choose $i_0,j_0$ to be the indices that maximize the expressions
\[\frac{Mf(r_{i})-Mf(e_{i+1})}{e_{i+1}-r_{i}}, \ \ \frac{Mf(r_{j})-Mf(e_{j})}{r_{j}-e_{j}}. \]
Then we have
\begin{equation*}
\begin{aligned}
f(s_{i_0})-Mf(e_{i_0+1})&\geq  \frac{Mf(r_{i_0})-Mf(e_{i_0+1})}{e_{i_0+1}-r_{i_0}}\cdot \omega(r_{i_0}) \\  &\geq \frac{Mf(r_{i_0})-Mf(e_{i_0+1})}{e_{i_0+1}-r_{i_0}}\cdot 32L \\ &\geq \frac{Mf(r_{i_0})-Mf(e_{i_0+1})}{e_{i_0+1}-r_{i_0}}\cdot32 \cdot \sum _{i=1}^{m} e_{i+1}-r_{i} \\
 &=  32\sum _{i=1}^{m} \frac{Mf(r_{i})-Mf(e_{i+1})}{e_{i+1}-r_{i}} \cdot (e_{i+1}-r_{i}) \\
 & \geq  32\sum _{i=1}^{m}  Mf(r_{i})-Mf(e_{i+1}) \\
 & \geq  8 \sum _{i=1}^{m} \var \widetilde{\mathbbm{p}}_{i}.
\end{aligned}
\end{equation*}
The same process applies to $f(t_{j_0})-Mf(e_{j_0})$. So set $s=s_{i_0},t=t_{j_0},u=e_{j_0}-L/2,v=e_{j_0}+L/2$ and note that 
\[|Mf(e_{j_0})-Mf(e_{i_0+1})| \leq \sum _{i=1}^{m} \var \widetilde{\mathbbm{p}}_{i}.\] 
Hence this choice satisfies desired properties.
\end{proof}

\section{bounding systems containing  different scales}
We first fix a system 
\[ a_1<b_1<a_2<b_2<\ldots<a_{\sigma}<b_{\sigma}<a_{\sigma+1}\]
satisfying $Mf(a_i)<Mf(b_i)$ and $Mf(a_{i+1})<Mf(b_i)$ for $1 \leq i\leq \sigma.$ We will use  $\mathbbm{P}$ to denote collection of all peaks $\mathbbm{p}_i=\{a_i<b_i<{a_{i+1}}\}$ 
arising from this system. The letter $\mathbbm{E}$ will stand for those $\mathbbm{p}_i$ that are essential. We further partition the essential peaks as follows: for $n >5, \  k \in \mathbbm{Z}$ we define
\[\mathbbm{E}^n_k=\{\mathbbm{p}_i \in \mathbbm{E}:2^{n-1}<\omega(b_i)\leq2^n, \ k2^{n-5}<b_i \leq (k+1)2^{n-5}\},\]
and we let $\mathbbm{E}'$ denote all essential peaks not belonging to one of the above collections.  

We first will  bound the variation of non-essential peaks, and then describe how to handle  $\mathbbm{E}'$. After these two relatively easy tasks we will set ourselves to bounding the variation of remaining peaks.  

\begin{lemma}
We have the inequality
\[\var ({\mathbbm{P}\setminus \mathbbm{E}}) \leq 2 \var f.\]
\end{lemma}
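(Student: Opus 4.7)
The plan is to exploit the defining inequality of a non-essential peak to replace the values of $Mf$ at the vertex by values of $f$ at a nearby point, and then to pay for the ``swap'' using the pointwise bound $Mf\geq f$.

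Fix a non-essential peak $\mathbbm{p}_i=\{a_i<b_i<a_{i+1}\}$ in $\mathbbm{P}\setminus\mathbbm{E}$. By definition there exists some integer $c_i$ with $a_i<c_i<a_{i+1}$ such that
\[
f(c_i)\;>\;Mf(b_i)-\tfrac14\var\mathbbm{p}_i,
\]
equivalently $2Mf(b_i)<2f(c_i)+\tfrac12\var\mathbbm{p}_i$. Plugging this into $\var\mathbbm{p}_i=2Mf(b_i)-Mf(a_i)-Mf(a_{i+1})$ I obtain
\[
\tfrac12\var\mathbbm{p}_i\;<\;2f(c_i)-Mf(a_i)-Mf(a_{i+1}).
\]
Since $Mf\geq f$ pointwise (take $r=0$ in the definition of $Mf$), this yields
\[
\var\mathbbm{p}_i\;<\;4f(c_i)-2f(a_i)-2f(a_{i+1})\;\leq\;2\bigl(f(c_i)-f(a_i)\bigr)+2\bigl(f(c_i)-f(a_{i+1})\bigr).
\]

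Now each of the two differences on the right is controlled by the variation of $f$ on a subinterval: as $a_i<c_i<a_{i+1}$,
\[
\bigl|f(c_i)-f(a_i)\bigr|+\bigl|f(c_i)-f(a_{i+1})\bigr|\;\leq\;\sum_{k=a_i}^{a_{i+1}-1}|f(k+1)-f(k)|.
\]
Summing over all non-essential peaks and observing that the half-open intervals $[a_i,a_{i+1})$ are pairwise disjoint (consecutive peaks share only the common endpoint $a_{i+1}$), the right-hand side telescopes into a sum over disjoint blocks of indices and is therefore bounded by $\var f$. Hence
\[
\var(\mathbbm{P}\setminus\mathbbm{E})\;<\;2\sum_{i}\left(\bigl|f(c_i)-f(a_i)\bigr|+\bigl|f(c_i)-f(a_{i+1})\bigr|\right)\;\leq\;2\var f.
\]

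There is no real obstacle here: the only mildly delicate point is keeping track of the sign/direction when replacing $Mf$ by $f$ at the endpoints $a_i,a_{i+1}$ (the inequality $Mf\geq f$ is used favourably because these terms enter with a minus sign) and then verifying that the open intervals $(a_i,a_{i+1})$ associated with distinct peaks do not overlap, so that the variation of $f$ is not overcounted.
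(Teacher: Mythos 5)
Your proof is correct and follows essentially the same route as the paper's: use the defining inequality of a non-essential peak to produce an interior point $c_i$ where $f$ nearly attains $Mf(b_i)$, replace $Mf(a_i),Mf(a_{i+1})$ by the smaller $f(a_i),f(a_{i+1})$ using $Mf\geq f$, and then sum, exploiting the disjointness of the intervals $[a_i,a_{i+1})$ to avoid overcounting $\var f$. The only difference is cosmetic: you spell out the final telescoping/disjointness step that the paper leaves implicit with ``From this our assertion is clear.''
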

\begin{proof}Since $\mathbbm{p} \in \mathbbm{P}\setminus \mathbbm{E} $ is  a non-essential peak we have a point $x_i \in [a_i+1, a_{i+1}-1]$ satisfying 
\[f(x_i) \geq Mf(b_i) -\frac{1}{4} \var \mathbbm{p}_i. \]
Then we have 
\begin{equation*}
\begin{aligned}
|f(x_i)-f(a_i)|+ |f(x_{i})-f(a_{i+1})| & \geq 2f(x_i)-f(a_i)-f(a_{i+1}) \\
& \geq 2(Mf(b_i) -\frac{1}{4} \var \mathbbm{p}_i)- Mf(a_i)-Mf(a_{i+1})\\
&= \frac{1}{2}\var \mathbbm{p}_i.
\end{aligned}
\end{equation*}
From this our assertion is clear.
\end{proof}
\begin{lemma}
We have
\[\var \mathbbm{E}' \leq 1200 \cdot \var f.\]
\end{lemma}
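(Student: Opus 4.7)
The plan begins with the observation that $\mathbbm{E}'$ consists precisely of the essential peaks $\mathbbm{p}_i = \{a_i < b_i < a_{i+1}\}$ with $\omega(b_i) \leq 32$: the collections $\mathbbm{E}^n_k$ are indexed over $n > 5$, and for fixed $n$ the conditions on $b_i$ exhaustively partition the integers, so every essential peak with $\omega(b_i) > 32$ belongs to one of them. By Lemma 1 each such peak satisfies $b_i - \omega(b_i) < a_i$ and $a_{i+1} < b_i + \omega(b_i)$, so the whole triple is confined to the window $[b_i - 32, b_i + 32]$ of length at most $64$. This spatial localization is the feature I will exploit.

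The core step is to bound $\var \mathbbm{p}_i$ by the variation of $f$ locally near $b_i$. The identity $A_{\omega(b_i)}f(b_i) = Mf(b_i)$ says that the values of $f$ on $[b_i - \omega(b_i), b_i + \omega(b_i)]$ average to $Mf(b_i)$. The essential hypothesis combined with $f(a_i) \leq Mf(a_i) < Mf(b_i)$ and $f(a_{i+1}) \leq Mf(a_{i+1}) < Mf(b_i)$ forces $f < Mf(b_i)$ strictly on all of $[a_i, a_{i+1}]$, so some $s_i \in [b_i - \omega(b_i), b_i + \omega(b_i)] \setminus [a_i, a_{i+1}]$ must satisfy $f(s_i) \geq Mf(b_i)$. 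On the other side, let $c_i \in \{a_i, a_{i+1}\}$ minimize $f$; since $Mf(a_i) + Mf(a_{i+1}) = 2 Mf(b_i) - \var \mathbbm{p}_i$, we get $f(c_i) \leq Mf(b_i) - \frac{1}{2}\var \mathbbm{p}_i$. Therefore
\[
\frac{1}{2}\var \mathbbm{p}_i \leq f(s_i) - f(c_i) \leq \sum_{l = \min(s_i,c_i)}^{\max(s_i,c_i) - 1}|f(l+1) - f(l)|,
\]
and the summation range lies in $[b_i - 32, b_i + 31]$.

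It then remains to sum over $\mathbbm{p}_i \in \mathbbm{E}'$ and interchange the order of summation, giving $\var \mathbbm{E}' \leq 2 \sum_l m_l |f(l+1) - f(l)|$, where $m_l$ counts the indices $i$ whose summation range contains $l$. Any such index forces $b_i \in [l - 31, l + 32]$, and since the $b_i$ are distinct integers $m_l$ is bounded by an absolute constant. Combining with $\sum_l |f(l+1)-f(l)| = \var f$ yields $\var \mathbbm{E}' \leq C \var f$. The only remaining obstacle is bookkeeping: the crude estimates above suggest $C$ of order a few hundred, and one merely has to be careful enough with the window sizes, the overlap count, and the factor of $2$ coming from the displayed estimate to stay within the claimed bound $1200$.
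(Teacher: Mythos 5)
Your proof is correct and takes a genuinely different route from the paper's. The paper partitions $\mathbbm{E}'$ into $300$ subfamilies according to the residue of $b_i$ modulo $300$; within each subfamily the peaks are spaced at least $300$ apart, so the windows $[b_i-32,b_i+32]$ produced by invoking the single-peak step of Lemma~2 are pairwise disjoint, giving $\var \mathbbm{E}'_l \leq 4\var f$ for each $l$ and hence $1200 \var f$ after summing over $l$. You instead skip the congruence decomposition entirely, sum the local contributions once, and control the overlap count $m_l$ directly: since each contributing window forces $b_i \in [l-31,l+32]$ and the $b_i$ are strictly increasing, $m_l \leq 64$, yielding the sharper bound $\var \mathbbm{E}' \leq 128 \var f$. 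You also replace the appeal to Lemma~2 with a self-contained argument for producing $s_i$: the identity $A_{\omega(b_i)}f(b_i) = Mf(b_i)$ together with the strict bound $f < Mf(b_i)$ on $[a_i,a_{i+1}]$ (from essentiality plus $f(a_i)\leq Mf(a_i)<Mf(b_i)$, etc.) forces a point in the averaging window outside $[a_i,a_{i+1}]$ where $f \geq Mf(b_i)$. Both proofs rest on the same underlying observation, namely that peaks in $\mathbbm{E}'$ are spatially localized in windows of radius $32$ by Lemma~1, and the variation of such a peak is witnessed locally; the paper's partition-and-disjointness accounting is slightly heavier but plugs directly into the Lemma~2 machinery, while your overlap-counting is leaner and gives a much better constant.
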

\begin{proof}
We partition the integers into subsets $\mathbbm{Z}_l=300\mathbbm{Z}+l$ for $  0\leq l <300.$  
Similarly partition $\mathbbm{E}'$ into
\[\mathbbm{E}'_l=\{ \mathbbm{p}_i=\{a_i<b_i<a_{i+1}\}:\mathbbm{p}_i \in \mathbbm{E}', \  b_i \in  \mathbbm{Z}_l\}.\]
We apply  to $\mathbbm{p}_i$ the same procedure as in the proof of Lemma 2. for a single peak to find $s_i<u_i<t_i$ satisfying $b_i-32 \leq s_i, t_i \leq b_i+32$ and 
 \[\min\{f(s_i),f(t_i)\}-f(u_i)\geq \frac{1}{4}\var\mathbbm{p}_i.\]
Using these points we have 
\begin{equation*}
\begin{aligned}
  \var \mathbbm{E}'_l = \sum_{\mathbbm{p}_i\in\mathbbm{E}'_l} \var \mathbbm{p}_i  \leq 4\cdot \sum_{\{i:\mathbbm{p}_i\in\mathbbm{E}'_l\}} |f(s_i)-f(u_i)|+|f(t_i)-f(u_i)| \leq 4\cdot \var f  
\end{aligned}
\end{equation*} 
since the peaks in $\mathbbm{E}'_l$ are sufficiently distant.
Thus
\[\var \mathbbm{E}' = \sum_l  \var \mathbbm{E}'_l \leq 300\cdot4\cdot\var f=1200 \cdot \var f.\]
\end{proof}
To handle the remaining  peaks we need to classify further. The next lemma will serve to this purpose.

\begin{lemma}
Let $\mathbbm{E}^n_k$ be non-empty for some $n\geq 6, k\in \mathbbm{Z}$. Then one of the following is true:\\
{\rm\textbf{A.}} There exists 
$s<\alpha<\beta<\gamma< \delta<t$ satisfying 
\[(k-64)2^{n-5} \leq s, \ \  \ t \leq(k+65)2^{n-5},\]
 \[  \alpha-s \geq 2^{n-5},\ \ \ \beta-\alpha \geq 2^{n-5} \ \ \ \gamma-\beta \geq2^{n-4}, \ \ \ \delta-\gamma\geq 2^{n-5}, \ \ \ t-\delta \geq 2^{n-5} \]   
\[ \min\{f(s),f(t)\}-\max \{A_{\alpha,\beta}f, A_{\gamma,\delta}f\} \geq \frac{1}{24}\var \mathbbm{E}^n_k \]
{\rm \textbf{B.}} There exists $\alpha<\beta<u<v<\gamma<\delta$ satisfying
\[(k-64)2^n \leq \alpha, \ \  \ \delta \leq(k+65)2^n,\]
\[\beta-\alpha \geq 2^{n-5}, \ \ \ u-\beta\geq 2^{n-5}, \ \ \  v-u\geq2^{n-5}, \ \ \ \gamma-v\geq 2^{n-5}, \ \ \ \delta-\gamma\geq 2^{n-5} \]

\[\min\{A_{\alpha,\beta}f, A_{\gamma,\delta}f \}-A_{u,v}f \geq \frac{1}{24}\var\mathbbm{E}^n_k.\]
\end{lemma}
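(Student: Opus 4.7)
My plan is to reduce Lemma 4 to Lemma 2 and then perform a threshold dichotomy on two auxiliary averages placed in the gaps of Lemma 2's output.

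First I would apply Lemma 2 with ambient interval $[x,y] = [k\cdot 2^{n-5}, (k+1)\cdot 2^{n-5}]$ of length $L = 2^{n-5}$: the hypothesis $n \geq 6$ makes $L$ an even positive integer, and the definition of $\mathbbm{E}^n_k$ guarantees $32L = 2^{n-1} < \omega(b_i) \leq 2^n = 64L$, so the hypotheses are met. Lemma 2 then produces points $s_0 < u_0 < v_0 < t_0$ satisfying $(k-64)2^{n-5} \leq s_0$, $t_0 \leq (k+65)2^{n-5}$, the separations $u_0 - s_0 \geq 4L$, $v_0 - u_0 = L$, $t_0 - v_0 \geq 4L$, and the key inequality
\[\min\{f(s_0), f(t_0)\} - A_{u_0,v_0}f \geq \tfrac{1}{12}\var\mathbbm{E}^n_k.\]

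Next I would introduce candidate inner windows $[\alpha_0, \beta_0] := [s_0 + L, s_0 + 2L]$ and $[\gamma_0, \delta_0] := [t_0 - 2L, t_0 - L]$. Since $t_0 - s_0 \geq 9L$, a short verification would confirm the ordering $s_0 < \alpha_0 < \beta_0 < u_0 < v_0 < \gamma_0 < \delta_0 < t_0$ together with all the separation requirements in both case (A) and case (B). Writing $P := \min\{f(s_0), f(t_0)\}$, $Q := A_{u_0,v_0}f$, and setting the threshold $T := \tfrac{1}{2}(P+Q)$, the Lemma 2 bound rearranges to $P - T = T - Q \geq \tfrac{1}{24}\var\mathbbm{E}^n_k$.

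The dichotomy then compares $A_1 := A_{\alpha_0,\beta_0}f$ and $A_2 := A_{\gamma_0,\delta_0}f$ against $T$. If both $A_1, A_2 \leq T$, then $\max\{A_1,A_2\} \leq P - \tfrac{1}{24}\var\mathbbm{E}^n_k$, yielding case (A) with $(s,t) = (s_0, t_0)$. If both $A_1, A_2 > T$, then $\min\{A_1,A_2\} > Q + \tfrac{1}{24}\var\mathbbm{E}^n_k$, yielding case (B) with $(u,v) = (u_0, v_0)$. The main obstacle I anticipate is the asymmetric sub-case in which exactly one of $A_1, A_2$ exceeds $T$; to handle this I would try to slide the candidate window on the `wrong' side within the roughly $30L$-wide buffer that Lemma 2's construction (in the system case) actually leaves between $s_0$ and $u_0$ and between $v_0$ and $t_0$, or, if that fails, reapply Lemma 2 to a suitable sub-system of peaks to obtain a fresh $(u_0, v_0)$ located on the high-average side. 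The generous constants $64$/$65$ in the location condition and the factor $1/24$ in the final inequality appear to leave enough slack to absorb such a refinement.
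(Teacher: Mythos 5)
Your overall plan — apply Lemma~2 to the interval $[k2^{n-5},(k+1)2^{n-5}]$, introduce two auxiliary averaging windows in the gaps between $s_0,u_0$ and $v_0,t_0$, and run a midpoint dichotomy — is exactly the structure of the paper's proof. The paper places its auxiliary windows hugging $[u_0,v_0]$ (it takes $\alpha=u_0-3\cdot2^{n-5}$, $\beta=u_0-2\cdot2^{n-5}$, $\gamma=v_0+2\cdot2^{n-5}$, $\delta=v_0+3\cdot2^{n-5}$), while you hug $s_0,t_0$ instead; both placements satisfy all the separation and location constraints, so that choice is immaterial.

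The genuine gap is precisely the asymmetric sub-case you flag and do not resolve. Both of your fallback ideas would fail: there is no $30L$ buffer guaranteed by the statement of Lemma~2 (only $u_0-s_0\geq4L$ and $t_0-v_0\geq4L$), and more fundamentally sliding a window within any buffer cannot force a low average to appear there, since nothing constrains $f$ on that stretch; likewise reapplying Lemma~2 to a sub-system gives you no control over where the new $(u,v)$ lands relative to the old points. The paper's resolution is much simpler and closes the gap with no new construction: in the asymmetric case you already have one low-average window and a second one for free, namely $[u_0,v_0]$ itself, since $Q=A_{u_0,v_0}f<T$. If $A_{\alpha_0,\beta_0}\leq T<A_{\gamma_0,\delta_0}$, take case~(A) with inner windows $[\alpha_0,\beta_0]$ and $[u_0,v_0]$ and endpoints $s_0,t_0$; the ordering $s_0<\alpha_0<\beta_0<u_0<v_0<t_0$ holds with $u_0-\beta_0\geq 2L=2^{n-4}$, and $\min\{f(s_0),f(t_0)\}-\max\{A_{\alpha_0,\beta_0}f,\,A_{u_0,v_0}f\}\geq P-T\geq\frac{1}{24}\var\mathbbm{E}^n_k$. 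The mirror sub-case uses $[u_0,v_0]$ and $[\gamma_0,\delta_0]$. With that one observation your proof is complete and matches the paper's.
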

\begin{proof}
We have by  Lemma 2 points $s<u<v<t$ for peaks of $\mathbbm{E}^n_k$ and interval $[k2^{n-5},(k+1)2^{n-5}].$ We then define
\[\alpha=u-3\cdot 2^{n-5}, \ \ \ \beta=u-2\cdot 2^{n-5}, \ \ \ \gamma=v+2\cdot 2^{n-5}, \ \ \ \delta= 3\cdot 2^{n-5}.\]
If the  inequality 
\[\min\{A_{\alpha,\beta}f, A_{\gamma,\delta}f \} \geq \frac{1}{2}\min\{f(s),f(t)\}+\frac{1}{2}A_{u,v}f\]
is satisfied then we just need to subtract $A_{u,v}f$ from both sides and use the Lemma 2
 to see \textbf{B} satisfied. Assume it does not hold. We first assume $A_{\alpha,\beta}=\min\{A_{\alpha,\beta}f, A_{\gamma,\delta}f \}$. In this case 
 \[\max\{A_{\alpha,\beta},A_{u,v}\}+\frac{1}{2}\min\{f(s),f(t)\}\leq \min\{f(s),f(t)\}+\frac{1}{2}A_{u,v}f \]                                                                                                                                       Applying Lemma 2 from here yields the desired inequality if we keep $\alpha, \beta$ the same, and set $\gamma=u,\delta=v$. For the case  $A_{\gamma,\delta}=\min\{A_{\alpha,\beta}f, A_{\gamma,\delta}f \}$  all  we need is to keep $\gamma,\delta$ the same and set $\alpha=u,\beta=v.$                                 
\end{proof}

Thus we define $\mathcal{A}$ to be the union of $\mathbbm{E}_k^n$ satisfying \textbf{A}, and $\mathcal{B}$ as the union those satisfying \textbf{B}. We further define $\mathcal{A}_K^n$ to be the union of $\mathbbm{E}_k^n$ in $\mathcal{A}$ for which $k=\mod 300$, and   $\mathcal{B}_K^n$ is defined analogously. Notice that since we took a finite number of peaks in  , there exists  $n_A$ representing the largest $n$ for which $\mathcal{A}_K^n$ is non-empty for at least one $K$. Similarly we have an $n_B.$ 
 In the next two sections we shall  deal respectively with variation arising from peaks of $\mathcal{A}$ and $\mathcal{B}$.

 \section{The Variation of peaks of $\mathcal{A}$ }
 
 The following is the main proposition we want to prove in this section.
 \begin{proposition}
Let $0\leq N \leq 11$,   $0\leq K\leq 299$ and let $L_N$ denote $2^{N-6}$ if $N\geq 6$ and $2^{N+6}$ if $n\leq5.$  There exists a system
\[x_1<u_1<v_1<x_2<u_2<v_2<x_3<\ldots<x_m<u_m<v_m<x_{m+1}\]   
 with properties
 \[u_i-x_i\geq L_{N}, \ \ v_i-u_i\geq L_N, \ \ x_{i+1}-v_{i}\geq L_N, \ \ 1 \leq i\leq m,   \]
\[\sum_{i=1}^mf(x_i)+f(x_{i+1})-2A_{u_i,v_i}f\geq \frac{1}{60}\sum_{n=N\mod 12}\var{\mathcal{A}_{K}^n}.\]  
 \end{proposition}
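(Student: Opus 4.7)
The plan is to assemble a single system by concatenating, in spatial order, the six-point configurations supplied by Lemma 3(A) for every peak $\mathbbm{E}_k^n \subset \mathcal{A}_K^n$ with $n \equiv N \pmod{12}$. First I would apply Lemma 3(A) to every such peak, producing six points $s_{n,k} < \alpha_{n,k} < \beta_{n,k} < \gamma_{n,k} < \delta_{n,k} < t_{n,k}$ inside $I_{n,k} := [(k-64)2^{n-5}, (k+65)2^{n-5}]$, with consecutive gaps at least $2^{n-5}$ and
\[\min\{f(s_{n,k}), f(t_{n,k})\} - \max\{A_{\alpha_{n,k},\beta_{n,k}}f,\, A_{\gamma_{n,k},\delta_{n,k}}f\} \geq \tfrac{1}{24}\var \mathbbm{E}_k^n.\]
The key geometric input is twofold: within a fixed scale $n$, the congruence $k \equiv K \pmod{300}$ forces the $I_{n,k}$ to be pairwise disjoint with gap at least $171 \cdot 2^{n-5}$; and across scales $n_1 < n_2$ with $n_1 \equiv n_2 \equiv N \pmod{12}$, the ratio $2^{n_2-n_1} \geq 2^{12}$ makes smaller-scale configurations much narrower than the within-peak spacing at the larger scale, so they slot entirely into either a between-peak gap or a within-peak gap of the larger-scale structure.

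From each peak I would extract the natural unit $(x_i, u_i, v_i, x_{i+1}) = (s_{n,k}, \alpha_{n,k}, \beta_{n,k}, t_{n,k})$, whose contribution is
\[f(s_{n,k})+f(t_{n,k})-2A_{\alpha_{n,k},\beta_{n,k}}f \geq \tfrac{1}{12}\var\mathbbm{E}_k^n.\]
Concatenating these units left to right across all peaks and scales produces a sequence of the desired alternating form, except that between two spatially consecutive peak-units we must insert a \emph{connector} unit $(t_{n,k}, u^*, v^*, x')$, where $x'$ is the next peak-unit's starting $x$-point. The connector's $(u^*, v^*)$ is chosen inside the intervening gap, either as one of the unused neighboring valleys $(\gamma, \delta)$ from an adjacent peak or as a short interval of length $L_N$ placed in the large between-peak separation. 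The spacing requirement $u_i-x_i, v_i-u_i, x_{i+1}-v_i \geq L_N$ is then automatic: within-peak gaps are $\geq 2^{n-5} \geq L_N$ for every admissible $n$ in the residue class (by definition of $L_N$), and between-peak gaps are much larger.

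Summing the per-peak gains $\tfrac{1}{12}\var\mathbbm{E}_k^n$ and subtracting the (possibly negative) connector terms, I aim for the bound $\tfrac{1}{60}\sum_{n \equiv N(\bmod 12)} \var\mathcal{A}_K^n = \tfrac{1}{5}\cdot\tfrac{1}{12}\sum_n \var\mathcal{A}_K^n$, which tolerates up to a four-fifths loss in the stitching. The hard part will be precisely the control of these connectors: each term $f(x_i)+f(x_{i+1})-2A_{u^*,v^*}f$ has a priori uncontrolled sign, so one must choose $(u^*, v^*)$ carefully and pair each connector with a single adjacent peak so that its potential loss can be charged against that peak's gain. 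The scale separation $2^{n_2-n_1}\geq 2^{12}$ and the spatial disjointness from $k\equiv K \pmod{300}$ ensure that each peak is adjacent to only a bounded number of connectors, but the accounting across all scales simultaneously, keeping the total loss at every level below four-fifths of the corresponding per-level gain, is the delicate bookkeeping from which the constant $\tfrac{1}{60}$ ultimately emerges.
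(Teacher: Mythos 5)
The central gap in your argument is the claim that smaller-scale configurations ``slot entirely into either a between-peak gap or a within-peak gap of the larger-scale structure.'' This is not true in general, and the case it omits is precisely where the paper's argument lives. Nothing forces the six-point configuration $[s,t]$ for $\mathbbm{E}_k^{n_0}$ (which has length about $129\cdot 2^{n_0-5}$) to avoid the averaging intervals $[u_i,v_i]$ already placed at larger scales; those intervals can be long (there is no stated upper bound on $v_i-u_i$), and a later, smaller configuration can land squarely inside one of them. When that happens you cannot simply ``slot in'' the new $(s,\alpha,\beta,t)$: the points would interleave with the old $u_i,v_i$ and destroy the alternating $x<u<v<x<\cdots$ structure. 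The scale separation $2^{12}$ and the congruence $k\equiv K\!\pmod{300}$ only prevent two configurations from the \emph{same} scale from colliding; across scales they buy you the uniqueness of the colliding index $i$, not its nonexistence. The paper's proof handles exactly this overlap case by subdividing $[u_i,v_i]$ into pieces of comparable length, locating a subinterval on which the average of $f$ is still $\leq A_{u_i,v_i}f$, and then, when that subinterval is the bad one containing the new configuration, invoking a dichotomy (the inequality pair in the proof labeled \eqref{eqd}) to either shrink $[u_i,v_i]$ directly or split it into two averaging intervals separated by one of the new points. That mechanism is missing from your argument and is not something the disjointness considerations can replace.

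A secondary but still substantive issue is the connector bookkeeping, which you flag but do not resolve. The paper avoids the problem by building the system \emph{inductively} over scales and inserting a single triple $(s,\alpha,\beta)$ or $(\gamma,\delta,t)$ into the existing alternating sequence at each step; because the quantity $\sum_i\bigl(f(x_i)+f(x_{i+1})-2A_{u_i,v_i}f\bigr)$ telescopes, replacing an endpoint $x_i$ by a new one and adding a new averaging interval changes the total by exactly $2f(s)-2A_{\alpha,\beta}f\geq \tfrac{1}{12}\var\mathbbm{E}_k^{n_0}$, with no uncontrolled connector term appearing. Your direct concatenation produces genuine connector terms of indeterminate sign and relies on an unproved ``pair each connector with a peak'' charging scheme; even if such a scheme exists, the factor $\tfrac{1}{5}$ of slack you allow is not derived from anything. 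The constant $\tfrac{1}{60}$ in the statement comes out of the paper's dichotomy (the $\tfrac{1}{120}$ per subsystem in \eqref{eqd}, doubled by the telescoping), not from a stitching loss.
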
 
We shall prove this inductively. Let $n_N=N \mod 12$ denote the maximum integer $n$ for which $\mathcal{A}_{K}^n$ is non-empty.  We clearly have a system as described above that bounds the variation of $\mathcal{A}_{K}^n$ which have $2^{n_N-5}$  instead of $L_N$ -this   is true only if $n_N>N$ of course, but if $n_N=N$ we directly obtain the desired system using Lemma 2-. Now assume we have a system that bounds sum of variations coming from all classes $\mathcal{A}_K^n$ for $n>n_0$  where $\mathcal{A}_K^{n_0}$ is non-empty, and that has $2^{n_0+12}$ instead of $L_N$. If we can modify this system so that it bouns all classes for $n\geq n_0$ with $2^{n_0}$  replacing $L_N$, a finite iteration would give our proposition.

Thus we assume there exists a system 
\begin{equation}\label{eqs}
x_1<u_1<v_1<x_2<u_2<v_2<x_3<\ldots<x_{m_0}<u_{m_0}<v_{m_0}<x_{m_0+1}\end{equation} 
 that satisfies conditions given by the inductive hypothesis above.
The class $\mathcal{A}_K^{n_0}$  is a union of a finite number of systems of peaks $\mathbbm{E}_k^{n_0}$, we will describe how to incorporate these into the existing system. Pick one  such  $\mathbbm{E}_k^{n_0}$   and consider  $s<\alpha<\beta<\gamma<\delta<t$ 
coming from the alternative \textbf{A} of Lemma 5 for it. We will modify \eqref{eqs} according to  its relation with the interval $[s,t]$. 

\textbf{I.} First assume for any  $1\leq i\leq m_0$  we have $\text{dist}([u_i,v_i],[s,t])\geq2^{n-5}.$  In this case  one of the intervals
\[(-\infty,u_1],\ [v_1,u_2],\ \ldots\  [v_{m_0-1},u_{m_0}],\ [v_0,\infty)\]
must contain $[s,t]$. This interval also contain a unique $x_i, 1\leq i \leq m_0$, which must satisfy either $\text{dist}(x_i,\beta)\geq \text{dist}(x_i,\gamma)$ or $\text{dist}(x_i,\beta)< \text{dist}(x_i,\gamma)$ . If the first happens we take $s,\alpha, \beta$, otherwise we take $\gamma,\delta,t$  and add them to our system. The new system is easily seen to 
satisfy desired properties.

\textbf{II.}  There exists an $i$ with $\text{dist}([u_i,v_i],[s,t])<2^{n-5}$. We first note that this $i$ is  unique. Observe that either $(k-150)2^{n-5}\in [u_i,v_i]$ or $(k+150)2^{n-5}\in [u_i,v_i],$ we will assume the first, as the second is handled similarly. Let $g=h=k \mod 300$ be such that 
$(g-155)2^{n-5} \leq x' <(g+145)2^{n-5},(h-155)2^{n-5} \leq y' <(h+145)2^{n-5}.$ Notice that these condition determine $g,h$ uniquely. Using these we partition our interval 
\[[u_i,(g+150)2^{n-5}-1], \  [(g+150)2^{n-5},(g+450)2^{n-5}-1], \ldots  [(h-150)2^{n-5},v_i].\]
One of these subintervals contains $(k-150)2^{n-5}$  which will be denoted by $I$ and, average of $f$ on one of these subintervals is less than or equal to  average over $[u_i,v_i]$, we will call this  $[u_i',v_i'].$ If $I$ is not the same as $[u_i',v_i']$, then  this latter interval is distant enough from $[s,t]$, and replacing $[u_i,v_i]$ by $[u_i',v_i']$  and choosing appropriate ones out of $\{s,\alpha,\beta,\gamma,\delta,t\}$ 
will do. 
If they are  the same then  we have to consider two different cases. Either   
 there exists $[c,d]\subset[u_i',v_i']$ with  $d-c\geq 2^{n-5}$ such 
that

\[A_{c,d}f \leq A_{u_i,v_i}f-\frac{1}{120}\var \mathbbm{E}_k^{n_0},\]
or we have a system 
$c<d<y<c'<d'$ with $[c,d']\subset[u_i',u_i'+300\cdot2^{n-5}]$ and 
\[d-c\geq 2^{n-5}, \ \ \ y-d \geq 2^{n-5}, \ \ \  c'-y\geq 2^{n-5}, \ \ \  d'-c'\geq 2^{n-5}, \] 
 such that 
\[A_{c,d}f +
A_{c',d'}f-f(y) \leq A_{u_i,v_i}f-\frac{1}{120}\var \mathbbm{E}_k^{n_0}.\]
 In both cases  what to do is clear,  in the first case $[u_i,v_i]$ is replaced by $[c,d]$, while in the second we replace $[u_i,v_i]$ by two intervals $[c,d],[c',d']$ and the point $y$ between them. But that one of these must hold should be shown. We set
\[w_i'=u_i'+\left\lceil \frac{v_i'-u_i'}{5}\right\rceil \] 
and observe that both $[u_i',w_i'], \ [w_i'+1,v_i']$ are longer than $2^{n-5}$. We have either
\begin{equation}\label{eqd} A_{w_i'+1,v_i'}f\leq A_{u_i',v_i'}f-\frac{1}{120}\var\mathbbm{E}_k^{n_0} \ \ \text{or} \ \ A_{u_i',w_i'}f \leq A_{u_i',v_i'}+\frac{4}{120}\var\mathbbm{E}_k^{n_0},  
\end{equation}  
and if the first holds we just set $c=w_i'+1,d=v_i'$ to obtain \textbf{a} whereas   if the second holds we let $c=u_i',d=w_i',y=s,c'=\alpha,d'=\beta.$ That $y-d\geq2^{n-5}$ follows from the definitions of $u_i',w_i'.$ 

We thus incorporated the  first $\mathbbm{E}^{n_0}_k$ into the system. For the rest we apply  a similar procedure but, we also have to deal with  previously made changes, which shorten the distance between successive points from $2^{n+7}$ to $2^{n-5}$. Let us incorporate  a second system $\mathbbm{E}^{n_0}_l$. Let our modified system be
\begin{equation}\label{eqs2}
x_1<u_1<v_1<x_2<u_2<v_2<x_3<\ldots<x_{m_{0,1}}<u_{m_{0,1}}<v_{m_{0,1}}<x_{m_{0,1}+1}
\end{equation} 
and consider $s'<\alpha'<\beta'<\gamma'<\delta'<t'$  coming from  Lemma 5 for $\mathbbm{E}^{n_0}_l$. We again have the same two alternatives which this time we will call \textbf{I'},\textbf{II'}, and  if \textbf{I'} is the case, exactly same ideas suffice. If on the other hand $\text{dist}([u_i,v_i],[s',t'])\leq 2^{n-5}$ holds for some $1\leq i\leq m_{0,1}$, then some additional consideration is needed. First we need to see that this  $[u_i,v_i]$ is unique. Since $k\neq l$ we have $\text{dist}([s',t'],[(k-150)2^{n-5}(k+150)2^{n-5}])\geq 2^{n-5}$ such $[u_i,v_i]$ can be either  unmodified intervals, or  only first of three types of intervals arising from \textbf{II}. If $[u_i,v_i]$ is close to an unmodified interval, it is sufficiently distant from all other unmodified intervals and intervals arising from $\textbf{II}.$ 
Similarly being close to an interval arising from $\textbf{II}$ guarantees distance from all unmodified intervals. Thus $[u_i,v_i]$ is unique. After this methods described in \textbf{II} handles both cases.  Clearly these considerations suffice to add the remaining systems, and after a finite number of steps we will have $\mathcal{A}^{n_0}_K$ incorporated.
  
  Thus the proof of our proposition is complete. From this proposition we easily deduce that  \[ \var \mathcal{A} \leq 120\cdot2^{12}\cdot 300 \cdot \var f.\]
  
  \section{The variation of peaks of $\mathcal{B}$}
  Arguments of this section will largely be analogous to those of section 4. We state the main proposition of this section.
   \begin{proposition}
Let $0\leq N \leq 11$,   $0\leq K\leq 299$ and let $L_N$ denote $2^{N-6}$ if $N\geq 6$ and $2^{N+6}$ if $n\leq5.$  There exists a system
\[x_1<y_1<u_1<v_1<x_2<y_2<u_2<v_2<\ldots<u_m<v_m<x_{m+1}<y_{m+1}\]   
 with properties
 \[y_i-x_i\geq L_N, \ \ 1 \leq i\leq m+1, \]
 \[u_i-y_i\geq L_{N}, \ \ v_i-u_i\geq L_N, \ \ x_{i+1}-v_{i}\geq L_N  \ \ 1 \leq i\leq m,   \]
\[\sum_{i=1}^mA_{x_i,y_i}f+A_{x_{i+1},y_{i+1}}f-2A_{u_i,v_i}f\geq \frac{1}{60}\sum_{n=N\mod 12}\var{\mathcal{B}_{K}^n}.\]  
 \end{proposition}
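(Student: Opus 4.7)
The plan is to mirror the inductive construction used for Proposition~1, with every point value $f(x_i)$ replaced by an average $A_{x_i,y_i}f$ and alternative~\textbf{B} of Lemma~5 used in place of alternative~\textbf{A}. We induct on the scale $n$, starting from $n=n_B$ (the largest $n$ with $\mathcal{B}_K^n$ non-empty) and descending through the residues $n\equiv N\pmod{12}$. At the base, distinct $\mathbbm{E}_k^{n_B}$ with $k\equiv K\pmod{300}$ are separated by at least $300\cdot 2^{n_B-5}$, so the sextuples $\alpha<\beta<u<v<\gamma<\delta$ produced by Lemma~5\textbf{B} concatenate directly into an initial system whose contribution is at least $\tfrac{1}{12}\var\mathbbm{E}_k^{n_B}$ per block.

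For the inductive step, assume a system for scales $>n_0$ has been built with spacing $\asymp 2^{n_0+12}$, and incorporate each $\mathbbm{E}_k^{n_0}\subset\mathcal{B}_K^{n_0}$ in turn using its sextuple from Lemma~5\textbf{B}. In Case~I, where $[u,v]$ is at distance $\geq 2^{n_0-5}$ from every existing $[u_j,v_j]$, the interval sits in a unique gap of the current system: either $[y_j,u_j]$, in which case we insert $(u,v)$ as a new $(u',v')$ paired with $(\gamma,\delta)$ as a new $(x',y')$ between the old $(x_j,y_j)$ and $(u_j,v_j)$; or $[v_j,x_{j+1}]$, in which case we symmetrically insert $(\alpha,\beta)$ and $(u,v)$. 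Relabeling shows that the sum grows by exactly $2A_{\gamma,\delta}f-2A_{u,v}f$ or $2A_{\alpha,\beta}f-2A_{u,v}f$, each of which is $\geq \tfrac{1}{12}\var\mathbbm{E}_k^{n_0}$.

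The main obstacle is Case~II, where $[u,v]$ lies within $2^{n_0-5}$ of some unique existing $[u_i,v_i]$: the new low-average interval cannot simply be inserted because it would overlap an existing low-average interval. Adapting the dichotomy \eqref{eqd} from Proposition~1, we first select a subinterval $[u'_i,v'_i]\subseteq[u_i,v_i]$ of average no larger than $A_{u_i,v_i}f$ and sufficient length, split at $w'_i=u'_i+\lceil(v'_i-u'_i)/5\rceil$, and check whether the right half has average at most $A_{u_i,v_i}f-\tfrac{1}{120}\var\mathbbm{E}_k^{n_0}$. If so, replace $[u_i,v_i]$ by this subinterval and insert $(u,v)$ together with an appropriate $(x,y)$ pair from the sextuple; otherwise use $(\alpha,\beta)$ or $(\gamma,\delta)$ as a bridge joining the left half of $[u_i,v_i]$, the new $(x,y)$ pair, and the new $(u,v)$, producing two low-average intervals separated by a high-average pair. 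In both subcases the averages on the two new low-average intervals remain within $\tfrac{4}{120}\var\mathbbm{E}_k^{n_0}$ of $A_{u_i,v_i}f$, so the previously accumulated sum is not decreased by more than a controlled fraction of the new gain.

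The spacing $\geq L_N$ is preserved at every step because the relevant subintervals and sextuple coordinates all have length at least $2^{n_0-5}$. The uniqueness of the conflicting $[u_i,v_i]$ together with the $300\cdot 2^{n_0-5}$ separation of distinct $k\equiv K\pmod{300}$ ensures that later insertions for other $\mathbbm{E}_{k'}^{n_0}$ encounter at most one previously modified interval and do not interfere with earlier modifications, exactly as in the proof of Proposition~1. Summing the per-insertion gains of $\geq\tfrac{1}{60}\var\mathbbm{E}_k^{n_0}$ over all $k$ and iterating the induction over the residues $n\equiv N\pmod{12}$ yields the desired lower bound.
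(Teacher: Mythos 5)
Your overall strategy of running the same induction as in Proposition~1, but with point evaluations $f(x_i)$ replaced by averages $A_{x_i,y_i}f$ and the sextuple from Lemma~5\textbf{B}, is the right one and matches the paper's intent. The treatment of the collision with a low-average interval $[u_i,v_i]$ (your Case~II) is essentially the paper's Case~\textbf{II}, including the split at $w_i'$ and the bridge structure $c<d<x<y<c'<d'$.

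However, there is a genuine gap: the paper's proof of Proposition~2 has \emph{three} cases, not two, and your proposal omits the third. Because the $(x_j,y_j)$ pairs are now intervals rather than points, the new sextuple $\alpha<\beta<u<v<\gamma<\delta$ can also collide with a \emph{high-average} interval $[x_j,y_j]$, and your Case~I is stated incorrectly: ``$[u,v]$ far from every $[u_j,v_j]$'' does \emph{not} put it in a gap of the form $[y_j,u_j]$ or $[v_j,x_{j+1}]$ --- it could overlap an $[x_j,y_j]$, which has length $\geq 2^{n_0+12}$ and hence easily contains the new sextuple. The paper's Case~\textbf{I} explicitly requires distance $\geq 2^{n-5}$ from \emph{both} the $[u_i,v_i]$'s and the $[x_i,y_i]$'s, and Case~\textbf{III} handles the collision with an $[x_i,y_i]$. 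That case is not a trivial relabeling of Case~II: one must extract a subinterval $[x_i',y_i']$ whose average is \emph{at least} $A_{x_i,y_i}f$ (not at most), run the analogous dichotomy with the inequality signs reversed, and --- when constructing the bridge $c<d<\mu<\nu<c'<d'$ --- impose the extra constraint $A_{c,d}f, A_{c',d'}f \geq A_{\mu,\nu}f$, which is needed precisely to handle the boundary indices $i=1$ and $i=m_0+1$ where the collided $[x_i,y_i]$ contributes to only one term of the telescoping sum. Without Case~III and this boundary safeguard, the induction does not close: an insertion near an endpoint high-average interval could decrease the accumulated sum by an uncontrolled amount. To complete the proof you would need to add this third case with its sign-reversed dichotomy and the $A_{c,d}f,A_{c',d'}f\geq A_{\mu,\nu}f$ condition.
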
 
  We shall again utilize induction. Assume we have a system 
  \[x_1<y_1<u_1<v_1<x_2<y_2<u_2<v_2<\ldots<u_{m_0}<v_{m_0}<x_{m_0+1}<y_{m_0+1}\] 
  that bounds the variation of all classes  $\mathcal{B}_K^n$ for $n>n_0$  where $\mathcal{B}_K^{n_0}$ is non-empty, and that has $2^{n_0+12}$ instead of $L_N$. 
  Let  $\mathbbm{E}_k^{n_0}$ be one of subsystems comprising $\mathcal{B}_K^{n_0}$, and 
consider  $\alpha<\beta<u<v<\gamma<\delta$ 
coming from the alternative \textbf{B} of Lemma 5 for it. We  again will  investigate the relation of our system  with the interval $[\alpha,\delta]$, this time however, we will have three cases.

\textbf{I.} First assume for all  $1\leq i\leq m_0$  we have $\text{dist}([u_i,v_i],[\alpha,\delta])\geq2^{n-5}$, and for all $1\leq i\leq m_0+1$ we have $\text{dist}([x_i,y_i],[\alpha,\delta])\geq2^{n-5}$.  This case is easy, we just choose two appropriate ones out of three intervals $[\alpha,\beta],[u,v],[\gamma,\delta]$, and incorporate to our system.  

\textbf{II.}There exist an $1\leq i\leq m_0$ such that  $\text{dist}([u_i,v_i],[\alpha,\delta])< 2^{n-5}.$ Clearly this $i$ is unique, moreover $[\alpha.\delta]$ is distant from $[x_i,y_i]$ type intervals. This case will be dealt with in the same way as the case \textbf{II} of section 4. We divide $[u_i,v_i]$ into subintervals and pick $I$, $[u_i',v_i']$ exactly in the same way.   The case when they are not the same is easy and handled as before, whereas if they are  the same  either   
 there exists $[c,d]\subset[u_i',v_i']$ with  $d-c\geq 2^{n-5}$ such 
that

\begin{equation}\label{a2}A_{c,d}f \leq A_{u_i,v_i}f-\frac{1}{120}\var \mathbbm{E}_k^{n_0},
\end{equation}
or we have a system 
$c<d<x<y<c'<d'$ with $[c,d']\subset[u_i',u_i'+300\cdot2^{n-5}]$ and 
\[d-c\geq 2^{n-5}, \ \ \ x-d \geq 2^{n-5}, \ \ \ y-x\geq 2^{n-5},\ \ \ c'-y\geq 2^{n-5}, \ \ \  d'-c'\geq 2^{n-5}, \] 
 such that 
\begin{equation}\label{b2}A_{c,d}f +
A_{c',d'}f-A_{x,y}f \leq A_{u_i,v_i}f-\frac{1}{120}\var \mathbbm{E}_k^{n_0}.
\end{equation}
In each  what to do is clear, we will show  that one of these holds. Defining  $w'_i$ as before  we have the dichotomy given in \eqref{eqd}. If the first alternative of this dichotomy holds we set $c=w_i'+1,d=v_i'$ and get \eqref{a2}, while if the second holds we set
\begin{equation}\label{choice}c=u_i', \ \ d=w_i', \  \ x=u,\ \ y=v, \ \ c'=\gamma, \ \ d'=\delta
\end{equation}
and obtain \eqref{b2}.

\textbf{III.} There exist an $1\leq i\leq m_0+1$ such that  $\text{dist}([x_i,y_i],[\alpha,\delta])< 2^{n-5}.$  This case is similar to what we have above, only essential difference will be changes in signs of averages over intervals. As above  this $i$ is unique, further $[\alpha.\delta]$ is distant from $[u_i,v_i]$ type intervals. We subdivide $[x_i,y_i]$ the way we did $[u_i,v_i]$ above and, choose $I$. This time, however, $[x_i',y_i']$ will be the subinterval on which average is not smaller than the average over $[x_i,y_i].$ If these are not the same, replacing $[x_i,y_i]$ with $[x_i',y_i']$ will suffice. If they are the same we either have $[c,d]\subset[x_i',y_i']$ with  $d-c\geq 2^{n-5}$ such 
that
\[A_{c,d}f \geq A_{x_i,y_i}f+\frac{1}{120}\var \mathbbm{E}_k^{n_0},\]
or we have a system 
$c<d<\mu<\nu<c'<d'$ with $[c,d']\subset[x_i',x_i'+300\cdot2^{n-5}]$ and 
\[d-c\geq 2^{n-5}, \ \ \ \mu-d \geq 2^{n-5}, \ \ \ \nu-\mu \geq 2^{n-5},\ \ \ c'-\nu \geq 2^{n-5}, \ \ \  d'-c'\geq 2^{n-5}, \] 
 such that 
\[A_{c,d}f +
A_{c',d'}f-A_{\mu,\nu}f \geq A_{x_i,y_i}f+\frac{1}{120}\var \mathbbm{E}_k^{n_0},\]
\[A_{c,d}f,A_{c',d'}f\geq 
A_{\mu,\nu}f\]
This last additional property handles problems arising when  $i=1$ and  $i=m_0+1$. As before in the  first case $[c,d]$ replaces $[x_i,y_i]$, while in the second $[c,d],[\mu,\nu],[c',d']$,  does. 
Defining  
\[z_i'=x_i'+\left\lceil \frac{y_i'-x_i'}{5}\right\rceil \] 
  we have the dichotomy
\[A_{z_i'+1,y_i'}f\geq A_{x_i',y_i'}f+\frac{1}{120}\var\mathbbm{E}_k^{n_0} \ \ \text{or} \ \ A_{u_i',z_i'}f \geq A_{x_i',y_i'}f-\frac{4}{120}\var\mathbbm{E}_k^{n_0}. \]
If the first is the case  we just set  $c=z_i'+1$, $d=y_i'$  to obtain \eqref{a2}, if the second holds we set $\mu=u,\ \ \nu=v,\ \ c'=\gamma, \ \ d'=\delta,$ and
\[c=\alpha, \ \ d=\beta \ \ \text{if} \ \ A_{\alpha,\beta}f\geq A_{x'_i,y_i'}f, \]
\[c=x_i',\  d=y_i'\ \ \text{if}  \ \  A_{x'_i,y_i'}f >A_{\alpha,\beta}f.\]
Here using  the interval on which average is greater guarantees the last additional property.

We thus incorporated $\mathbbm{E}_k^{n_0}$ into our system. To incorporate the rest we have to deal with previously made changes. Let us incorporate  a second system $\mathbbm{E}^{n_0}_l$. Let our modified system be
\begin{equation}\label{eqs3}
x_1<y_1<u_1<v_1<x_2<u_2<\ldots<u_{m_{0,1}}<v_{m_{0,1}}<x_{m_{0,1}+1}<y_{m_{0,1}+1}
\end{equation} 
and $\alpha'<\beta'<u'<v'<\gamma'<\delta'$  coming from  Lemma 5 for $\mathbbm{E}^{n_0}_l$. We  have  the same three  alternatives which  we will call \textbf{I'},\textbf{II'},\textbf{III'}, and  if \textbf{I'} is the case, exactly same ideas suffice. If $II'$ is the case, that is if $[\alpha,\delta]$ is close to $[u_i,v_i]$ for some $1\leq i \leq m_0$, then this is either an unmodified interval, or emerges  as the first  of three types of intervals arising from \textbf{II}. In either case $i$ should be unique by  the same considerations as in section 4, and methods explained in \textbf{II} handles this case. If  \textbf{III'} holds then by the same arguments  $[\alpha,\delta]$ is close to $[x_i,y_i]$ for a unique $1\leq i\leq m_{0}+1$, and this $[x_i,y_i]$ is either unmodified, or a result of a modification through first of three methods described in \textbf{III}. In either case methods of \textbf{III} deals with this case. Clearly these considerations suffice to add the remaining systems, and after a finite number of steps we will have $\mathcal{B}^{n_0}_K$ added.
 
 This completes the proof of our proposition from which we easily obtain 
\[\var \mathcal{B} \leq 120\cdot2^{12}\cdot 300 \cdot \var f.\]

 \section{Proof of Theorem 1}
 
 We now use results we proved in sections 3,4,5 to prove our theorem. We have
 \[\var Mf \leq \sum_{k=-\infty}^{\infty}|f(k+1)-f(k)| \leq \sup_{m,n:\ m\leq n} \sum_{k=m}^n |f(k+1)-f(k)|. \]
 For each couple $m,n$ with $m\leq n$ dispensing with redundant elements the interval $[m,n+1]$ gives a system 
 \[ b_0\leq a_1<b_1<a_2<b_2<\ldots<a_{\sigma}<b_{\sigma}<a_{\sigma+1}\leq b_{\sigma+1}\]
 with $Mf(a_i)<Mf(b_i),  \ Mf(a_{i+1})<Mf(b_i)$ for $1\leq i \leq\sigma $, and $Mf(a_1)\leq Mf(b_0),\ \ Mf(a_{\sigma+1})\leq Mf(b_{\sigma+1})$
 \begin{align*}\sum_{k=m}^n |f(k+1)-f(k)|&= \sum_{i=1}^{\sigma}\big(2Mf(b_i)-Mf(a_{i+1})-Mf(a_i)\big)\\ &+Mf(b_0)-Mf(a_1)+Mf(a_{\sigma+1})-Mf(b_{\sigma+1})\end{align*}
 We apply Lemma 3, Proposition 1, Proposition 2 to obtain 
 \[\sum_{i=1}^{\sigma}\big(2Mf(b_i)-Mf(a_{i+1})-Mf(a_i)\big)\leq (2\cdot120\cdot2^{12}\cdot300+2)\cdot \var f.\]
 On the other hand
 \begin{align*}Mf(b_0)-Mf(a_1)+Mf(a_{\sigma+1})-Mf(b_{\sigma+1})&\leq 2\sup_{k\in\mathbbm{Z}} Mf(k)-2\inf_{k\in\mathbbm{Z}}Mf(k)\\ &\leq 2\sup_{k\in\mathbbm{Z}} f(k)-2\inf_{k\in\mathbbm{Z}}f(k)\\ &\leq 2\var f
 \end{align*}
 So finally  taking supremum on the left we have
 \[\var Mf \leq (2\cdot120\cdot2^{12}\cdot300+4)\cdot \var f.\]

\end{document}